\title{Simple proof of the completeness theorem \\ for \\ 
second order classical and intuitionistic logic \\ by \\ reduction to 
first-order mono-sorted logic\footnote{Paru dans : Theoretical
  Computer Science, vol 308, pp. 227-237, 2003}}
\author{Karim NOUR \\Christophe RAFFALLI\\
\small LAMA - Equipe de Logique\\
\small Universit\'e de Chamb\'ery\\
\small 73376 Le Bourget du Lac\\
\small e-mail nour@univ-savoie.fr, raffalli@univ-savoie.fr}
\date{}
\begin{document}

\maketitle

\begin{abstract}
We present a simpler way than usual to deduce the completeness theorem
for the second-oder classical logic from the first-order one. We also
extend our method to the case of second-order intuitionistic logic.
\end{abstract}

\section{Introduction}
The usual way (but not the original Henkin's proof \cite{Hen49,Hen50}) 
for proving the completeness theorem for second-order logic is to
deduce it from the completeness theorem for first-order multi-sorted
logic \cite{Fef74}. There is clearly a trivial translation from second-order
logic to first-order multi-sorted
logic, by associating one sort to first-order objects and, for each $n
\in {\mathbb N}$, one sort for predicates of arity $n$.

Another way (due Van Dalen \cite{VDal94}) to is to deduce it from the completeness theorem for
first-order mono-sorted logic: Van Dalen method's is to associate a first-order variable $x$ to each second-order
variable $X$ of ariry $n$, and encode the atomic formula $X(x_1,\dots,x_n)$
by $\Ap_n(x,x_1,\dots,x_n)$ where $\Ap_n$ is a relation
symbol of arity $n+1$. Then, this coding is extended to all
formulas.We write it $F \mapsto F^*$. However, to allow the translation
between second-order proofs and first-order proofs, one adds some
axioms to discriminate between first and second-order objects.
The critical point is the translation of quantifications:

\begin{itemize}
\item For first-order quantification we define 
$(\forall x\, F)^* = \forall x (v(x) \rightarrow F^*)$ where $v$ is a new
predicate constant.
\item For second-order quantification of arity $n$ we define 
$(\forall X^n\, F)^* = \forall x (V_n(x) \rightarrow  F^*)$ where $V_n$ is a new
predicate constant.
\end{itemize}
Then we add axioms relating $v$, $V_n$ and $\Ap_n$ such as 
$\forall x\forall y (\Ap_1(x,y) \rightarrow V_1(x) \et v(y))$.

The problem is that this translation is not surjective. So it is
not immediate to prove that if $F^*$ is provable in first-order logic
then $F$ is provable in second-order logic, because all the formulas
appearing in the proof of $F^*$ are not necessarily of the shape
$G^*$. It is not even clear that the proof in \cite{VDal94} which is
only sketched can be completed into a correct proof (at least the
authors do not know how to end his proof). May be there is a solution
using the fact that subformulas of $F^*$ are nearly of the shape $G^*$
and one could use this in a direct, but very tedious, proof by
induction on the proof of $F$ using the subformula property which is a
strong result.

Our solution, is to simplify Van Dalen's translation $F \mapsto F^*$
from second-order logic to first-order. The novelty of this paper is
to replace Van Dalen's axiom's and extra predicate constant by a
coding $F \mapsto \rev{F}$ from first-order logic to second-order such
that $\rev{F^*}$ and $F$ are logically equivalent. To achieve this we
consider that in first order logic the same variable may have
different meanings (in the semantics) depending on it's position in
atomic formulas. Thus, we can translate any first-order formula back
to a second-order formula.

Using this method we can also deduce a definition of Kripke models
\cite{Kri65} for second-order intuitionistic logic and easily get a
completeness theorem. This models are similar to Prawitz's
second-order Beth's models \cite{Pra70,Bet56}.

This was not at all so clear with Van Dalen's method (as we do not how
to end his proof) if we need classical absurdity to use the extra
axioms. We also give some simple examples showing that despite a
complex definition, computation is possible in these models.

\medskip
{\bf Acknowledgement.}  We wish to thank both referees for their
comments that helped a lot to improve the paper and Miss Christelle
Favre for her assistance in the checking for certain proofs while she
was preparing her master-thesis.

\section{Coding}

\begin{definition}[second-order language]
Let $\Lang_2$, the language of second-order logic, be the following:
\begin{itemize}
\item The logical symbols $\faux$,$\fl$ , $\et$, $\ou$, $\q$ and $\e$.
\item A countable set ${\cal V}$ of first-order variables :
$x_0, x_1, x_2, \dots$
\item A countable set $\Sigma$ of constants and functions symbols (of
various arity) : $a, b, f, g, h, \dots$.
\item Using ${\cal V}$ and $\Sigma$ we construct the set of
first-order terms $\cal T$ : $t_1,t_2,...$
\item For each $n \in \mathbb{N}$, a countable set ${\cal V}_n$ of
second-order variables of arity $n$ : $X_0^n , X_1^n , X_2^n, \dots$.
\end{itemize}
\end{definition}

To simplify, we omit second-order constants (they can be replaced by
free variables).

% We also omit function symbols. Then can be encoded as
% usual by relations (using free variables) in a way that preserves
% provability and truth. This means that we can easily deduce the
% completeness theorem with functions symbols and constant relations
% from then completeness theorem without them. Our method would not work
% if we did not remove functions symbols \footnote{We do not know how to
% define the reverse translation $\rev{F}$ of a atomic formule of the
% form $F = \Ap_n(f(x),t_1,\dots,t_n)$.}  (constant relation are not a
% problem).
 
\begin{definition}[first-order language]
Let $\Lang_1$, a particular language of first-order logic, be the following:
\begin{itemize}
\item The logical symbols $\faux$,$\fl$ , $\et$, $\ou$, $\q$ and $\e$.
\item A countable set ${\cal V}$ of first-order variables :
$x_0, x_1, x_2,\dots$ (it is simpler to use the same set of first-order variables in $\Lang_1$ and $\Lang_2$).
\item A countable set $\Sigma$ of  constants and functions symbols (of
various arity) : $a, b, f, g, h, \dots$. Here again we use the same
set as for $\Lang_2$.
\item For each $n \in \mathbb{N}$, a relation symbol $\Ap_n$ of arity $n+1$.
\end{itemize}
\end{definition}

\paragraph{Notations}
\begin{itemize}
\item We write $\Free(F)$ for the set of all free variables of a
formula $F$.
\item We write $F \equi G$ for $(F \fl G) \et (G \fl F)$.
\item We write $F[x:=t]$ for the first-order substitution of a term.
\item We write $F[X^n:=Y^n]$ for the second-order substitution of a variable.
\item We write $F[X^n:=\lambda x_1\dots x_n G]$ for the second-order
substitution of a formula.
\item We will use natural deduction \cite{Pra65,VDal94} both for second and first-order
logic, and we will write $\Gamma \vdash_k^n F$ with $k \in \{i , c \}$
(for intuitionistic or classical logic) and $n \in \{1 , 2 \}$ (for
first or second-order).
\end{itemize}

We have the following lemma:

\begin{lemma}\label{substitution}
If  $\Gamma \vdash_k^n A$ then, for every substitution
$\sigma$,  $\Gamma[\sigma] \vdash_k^n A[\sigma]$.
\end{lemma}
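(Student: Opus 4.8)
The plan is to argue by induction on the height of the natural deduction derivation of $\Gamma \vdash_k^n A$, uniformly in $k \in \{i,c\}$ and $n \in \{1,2\}$. Recall that a substitution $\sigma$ is a finite assignment sending some first-order variables to terms and, for each arity $m$, some second-order variables of arity $m$ to second-order variables of arity $m$ or to abstractions $\lambda x_1\dots x_m\, G$; it is extended to all formulas in the capture-avoiding way (renaming bound variables whenever necessary) and to contexts by $\Gamma[\sigma] = \{B[\sigma] : B \in \Gamma\}$. It is convenient to prove the slightly stronger statement ``for every derivation of height $\le h$ and every substitution $\sigma$, the conclusion holds'', so that an auxiliary renaming step below may itself invoke the induction hypothesis; we also record the trivial fact that applying a substitution to a derivation does not increase its height.

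The routine cases are the rules that carry no side condition on variables: the hypothesis rule (where $A\in\Gamma$ immediately gives $A[\sigma]\in\Gamma[\sigma]$), the introduction and elimination rules for $\fl$, $\et$, $\ou$, the rule for $\faux$, and, when $k=c$, the classical absurdity rule. In each of these, substitution commutes with the connective at the root of the principal formula (e.g.\ $(B\fl C)[\sigma] = B[\sigma]\fl C[\sigma]$), so applying the induction hypothesis to the immediate subderivations with the \emph{same} $\sigma$ and then reapplying the rule yields a derivation of $\Gamma[\sigma]\vdash_k^n A[\sigma]$. The elimination rules for the quantifiers are only marginally more involved: one needs the commutations $A[x:=t][\sigma] = A[\sigma][x:=t[\sigma]]$ and, in the second-order case, $A[X^m:=\lambda x_1\dots x_m\, G][\sigma] = A[\sigma][X^m:=\lambda x_1\dots x_m\, G[\sigma]]$, which hold as soon as the bound variables of $A$ are kept disjoint from everything occurring in $\sigma$; combined with the induction hypothesis, these provide the required derivations.

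The genuinely delicate cases are $\q$-introduction (first- and second-order) and the discharging premise of $\e$-elimination, all of which carry an eigenvariable condition. Consider $\q$-introduction: one has a derivation of $\Gamma\vdash_k^n A$ whose last step binds an eigenvariable $z$ — a first-order variable $x$ or a second-order variable $X^m$ — under the hypothesis $z\notin\Free(\Gamma)$, concluding $\Gamma\vdash_k^n \q z\, A$. If $z$ lies in the domain of $\sigma$ or inside one of its values, a naive application of $\sigma$ would capture $z$. To avoid this, choose a variable $z'$ of the same kind as $z$ occurring neither in $\Gamma$, nor in $A$, nor anywhere in $\sigma$, and let $\rho$ be the renaming $[z:=z']$. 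Since $z\notin\Free(\Gamma)$ we have $\Gamma[\rho]=\Gamma$, so the induction hypothesis applied to the immediate subderivation with $\rho$ gives a derivation of $\Gamma\vdash_k^n A[\rho]$ of no greater height; the induction hypothesis then applies to it once more, now with $\sigma$, yielding $\Gamma[\sigma]\vdash_k^n A[\rho][\sigma]$. As $z'$ is fresh for $\sigma$, the formula $A[\rho][\sigma]$ is exactly the body obtained when the bound $z$ in $(\q z\, A)[\sigma]$ is renamed to $z'$, and $z'\notin\Free(\Gamma[\sigma])$; hence $\q$-introduction applies and produces a derivation of $(\q z\, A)[\sigma]$. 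The discharging premise of $\e$-elimination is handled in the same way on its eigenvariable.

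The main obstacle is concentrated precisely in this last case: the bookkeeping needed to rename eigenvariables so that they become fresh for $\sigma$ before pushing $\sigma$ through, together with the need to phrase the induction hypothesis for all derivations of bounded height (and to observe that substitution does not increase height) so that the renaming step is legitimate. Everything else reduces to a mechanical check that substitution commutes with the logical symbols and with instantiation.
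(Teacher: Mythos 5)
Your proof is correct and is the standard argument: the paper states this substitution lemma without any proof at all, treating it as a routine fact about natural deduction, so there is nothing to compare against beyond noting that your induction on the height of the derivation, with the strengthened hypothesis and the eigenvariable renaming in the $\q$-introduction and $\e$-elimination cases, is exactly the argument the authors leave implicit. The one point worth keeping explicit, as you do, is that the substituted derivation has height no greater than the original, since that is what legitimates the double application of the induction hypothesis in the eigenvariable cases.
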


\begin{definition}[coding]
We choose for each $n \in \mathbb{N}$ a bijection $\phi_n$ from
${\cal V}_n$ to ${\cal V}$. The fact that it is a bijection for
each $n$ is the main point in our method.

Let $F$ be a second-order formula, we define a first-order formula $F^*$ by induction as
follows:
\begin{itemize}
\item $\faux^* = \faux$
\item $(X^n(t_1,\dots,t_n))^* = \Ap_n(\phi_n(X^n),t_1,\dots,t_n)$
\item $(A \Diamond B)^* = A^*\Diamond  B^*$ where $\Diamond \in \{ \fl ,
\et, \ou \}$
\item $(Q x\, A)^* = Q y (A[x:=y])^*$ where $y \not \in \Free(A^*)$ and
 $Q \in \{ \q, \e \}$
\item $(Q X^n\, A)^* = Q y (A[X^n:=Y^n])^*$ where $\Phi_n(Y^n) = y$, $y
\not \in \Free(A^*)$ and $Q \in \{ \q, \e \}$
\end{itemize}
\end{definition}

\begin{remark} In the coding, the same free first-order variable (this
will not be the case for bound ones) has different meanings depending on
its location in the translated formula. 
\end{remark}

\begin{example} $(\q X (X(x) \fl X(y)))^* = \q z (\Ap_1(z,x) \fl
\Ap_1(z,y))$.  This example illustrates why we need renaming. For
instance, if $\Phi_1(X)$ were equal to $x$ or $y$ in $(X(x) \fl X(y))^*$.
\end{example}

\begin{remark} The mapping $F \mapsto F^*$ is not surjective, for
instance there is no antecedent for $\q x\,\Ap_1(x,x)$ or $\Ap_1(f(a),a)$.
\end{remark}

\begin{definition}[comprehension schemas]
The second-order comprehension schema $SC_2$ is the set of all closed formulas 
$SC_2(G;x_1,\dots,x_n;\chi_1,\dots,\chi_m)$ where $\{x_1,\dots,x_n\}
\subset {\cal V}$ and $\Free(G) \subseteq \{x_1,\dots,x_n,\chi_1,\dots,\chi_m\}$ and 
$$SC_2(G;x_1,\dots,x_n;\chi_1,\dots,\chi_m) = \q \chi_1\dots\q \chi_m \exists X^n \q x_1\dots\q x_n \left(G \equi
X^n(x_1,\dots,x_n)\right) \in SC_2$$
where $X^n \not \in F_v(G)$.

The first-order comprehension schema $SC_1$ is defined simply as
$SC_2^* = \{F^*, F \in SC_2\}$
\end{definition}

It is easy to show that $SC_2$ is provable in second order logic. 

\begin{remark}\label{remarkSC}
Let $F = X(x)$ where $\Phi_1(X) = x$. We have:
\begin{itemize}
\item $SC_2(F;x;X) = \q X \e Y \q x (F \equi Y(x)) \in SC_2$.
\item $SC_2(F;x;X)^* = (\q X \e Y \q x (F \equi Y(x)))^* = \q z \e y \q x
(Ap_1(z,x) \equi Ap_1(y,x)) \in SC_1$.
\end{itemize}
It is easy to see that $(\q X \e Y \q x (F \equi Y(x)))^* = 
\q z \e y \q x (F[X := Z]^* \equi Ap_1(y,x))$ where $\phi_1(Z) = z
\not = x$.

In general we have the following result : for each second-order formula $G$
there is a variable substitution $\sigma$ such that
$$
\begin{array}{rcl}
SC_2(G;x_1,\dots,x_n;\chi_1,\dots,\chi_m)^* &=&
\left(\q \chi_1\dots\q \chi_m \exists X^n \q x_1\dots\q x_n \left(G \equi
X^n(x_1,\dots,x_n)\right)\right)^* \cr &=&
\q y_1\dots\q y_m \exists x \q x_1\dots\q x_n \left(G[\sigma]^* \equi 
Ap_n(x,x_1,\dots,x_n)\right). \cr 
\end{array}
$$
\end{remark}

We can now show the following theorem (we will not use it):

\begin{theorem}\label{transproofun}
Let $\G$ be a second-order context and $A$ a second-order formula.
If $\G \vdash_{k}^{2} A$ then $\G^*,SC_1 \vdash_{k}^{1} A^*$  ($k
\in \{ i , c \}$).
\end{theorem}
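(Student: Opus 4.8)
The plan is to argue by induction on the derivation of $\Gamma \vdash_k^2 A$ in natural deduction. Since the coding $(\cdot)^*$ commutes with $\fl$, $\et$, $\ou$ and, up to the renaming of bound variables built into the definition, with the first- and second-order quantifiers, and since $\faux^* = \faux$, almost every rule of natural deduction (including the classical reductio rule, because $(\lnot A)^* = \lnot A^*$) translates to the corresponding first-order rule applied to the translated formulas; the set $SC_1$ simply travels along in the context and is never used. Before starting one records the routine substitution lemma for the coding, namely $(F[x:=t])^* = F^*[x:=t]$ and, more generally, $(F[X^n:=Y^n])^* = F^*[\phi_n(X^n):=\phi_n(Y^n)]$, both modulo the choice of fresh bound variables; this is proved by a direct induction on $F$ and is what makes the quantifier cases below go through.

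For the structural, propositional and first-order rules there is nothing more to do. For the second-order $\q$-introduction and $\e$-elimination rules, where a variable $X^n$ must not occur free in the context, I would first use Lemma~\ref{substitution} (in second-order logic) to rename $X^n$ to a fresh variable $Y^n$ chosen so that $\phi_n(Y^n)$ is a first-order variable occurring nowhere in the translated sequent, and only then apply the corresponding first-order rule. Here one must be careful: the whole point of the coding is that the same first-order variable may code several second-order variables, or even a genuine first-order variable, so the freshness condition has to be checked on the translated sequent rather than on the original one. Since every formula of $SC_1$ is closed, $SC_1[\sigma] = SC_1$ for every substitution, so Lemma~\ref{substitution} applies without disturbing the axioms.

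The only rules that genuinely need $SC_1$ are the second-order $\q$-elimination and $\e$-introduction rules, i.e.\ those that substitute a formula $\lambda x_1\dots x_n\,G$ for a predicate variable. Take $\q$-elimination: from $\Gamma \vdash_k^2 \q X^n\, A$ one infers $\Gamma \vdash_k^2 A[X^n := \lambda x_1\dots x_n\,G]$, and by induction hypothesis $\Gamma^*, SC_1 \vdash_k^1 (\q X^n\, A)^* = \q y\,(A[X^n:=Y^n])^*$; the goal is $\Gamma^*, SC_1 \vdash_k^1 (A[X^n := \lambda x_1\dots x_n\,G])^*$. I would invoke the instance of $SC_1$ attached to $G$: by Remark~\ref{remarkSC}, after instantiating its outer universal quantifiers with the appropriate terms, it is a formula of the shape $\e x\,\q x_1\dots\q x_n\,(G[\sigma]^* \equi \Ap_n(x,x_1,\dots,x_n))$. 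Using $\e$-elimination, pick a fresh witness $x_0$ with $\q x_1\dots\q x_n\,(G[\sigma]^* \equi \Ap_n(x_0,x_1,\dots,x_n))$; instantiate $\q y\,(A[X^n:=Y^n])^*$ at $y := x_0$, obtaining a formula in which every atom coding an occurrence of $X^n(t_1,\dots,t_n)$ has become $\Ap_n(x_0,t_1,\dots,t_n)$. A replacement-of-equivalents lemma---if $\q x_1\dots\q x_n\,(K \equi \Ap_n(a,x_1,\dots,x_n))$ is provable, $a \notin \Free(K)$, and $a$ occurs in a formula $H$ only inside atoms $\Ap_n(a,\dots)$ that are not under binders capturing $a$ or $x_1,\dots,x_n$, then $H$ is provably equivalent to the formula obtained from $H$ by replacing each $\Ap_n(a,t_1,\dots,t_n)$ by $K[x_1:=t_1,\dots,x_n:=t_n]$---then shows that this instantiated formula is provably equivalent to $(A[X^n := \lambda x_1\dots x_n\,G])^*$, which is therefore provable from $\Gamma^*, SC_1$. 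The $\e$-introduction case is symmetric: the same comprehension instance and the same replacement lemma rewrite $(A[X^n := \lambda x_1\dots x_n\,G])^*$ into $(A[X^n:=Y^n])^*[y:=x_0]$, from which $\e$-introduction yields $\e y\,(A[X^n:=Y^n])^* = (\e X^n\, A)^*$.

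The main obstacle is this last step: making the renaming $\sigma$ supplied by Remark~\ref{remarkSC} line up exactly with the substitution that $(\cdot)^*$ performs on $A[X^n := \lambda x_1\dots x_n\,G]$, and giving a careful statement and induction-on-$H$ proof of the replacement-of-equivalents lemma with due attention to variable capture (which is why $x_0$ and all the relevant bound variables must be taken fresh). Everything else is routine bookkeeping once the substitution lemma for the coding and the closedness of $SC_1$ are available.
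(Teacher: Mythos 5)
Your proof follows essentially the same route as the paper's own (which is only a one-sentence sketch): induction on the derivation, with $SC_1$, Remark~\ref{remarkSC} and Lemma~\ref{substitution} carrying the weight in the second-order $\forall$-elimination and $\exists$-introduction cases. Your elaboration of the comprehension instance, the replacement-of-equivalents step, and the freshness bookkeeping correctly fills in details the paper leaves implicit.
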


\begin{proof}
By induction on the derivation of $\G \vdash_{k}^{2} A$, using $SC_1$,
remark \ref{remarkSC} and lemma \ref{substitution}
for the case of the second-order elimination of $\q$ and the
second-order introduction of $\e$. \qed
\end{proof}

\begin{definition}[reverse coding]
Let $F$ be a first-order formula, we define a second-order formula
$\rev{F}$ by induction as follows:
\begin{itemize}
\item $\rev{\faux} = \faux$
\item $\rev{Ap_n(x,t_1,\dots,t_n)} = X^n(t_1,\dots,t_n)$ where $X^n= \phi^{-1}_n(x)$
\item $\rev{Ap_n(t,t_1,\dots,t_n)} = \faux$ if $t$ is not a variable.
\item $\rev{(A \Diamond B)} = \rev{A} \Diamond  \rev{B}$ where $\Diamond \in \{ \fl ,
\et, \ou \}$
\item $\rev{(Q x\,A)} = Q x Q X^{i_1} \dots Q X^{i_p} \rev{A}$ where 
$Q \in \{ \q, \e \}$, $X^n= \phi^{-1}_n(x)$ for all $n \in \mathbb{N}$,
$i_1 < i_2 < \dots < i_p$ and $\{X^{i_1},\dots,X^{i_p}\}  =
{\cal V}_n \cap  \Free(\rev{A})$
\end{itemize}
\end{definition}

\begin{remark}
We don't need renaming in order to define $\rev{(Q x\,A)}$ since the $\phi_n$ are bijections.
\end{remark}

\begin{lemma}\label{idempotent}
If $A$ is a second order formula then $\vdash_{i}^{2} \rev{A^*} \equi A$.
\end{lemma}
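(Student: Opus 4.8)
The plan is to argue by induction on the structure of $A$ (for definiteness, on the number of connectives and quantifiers occurring in $A$). The two base cases are immediate and in fact give equality, not merely provable equivalence: if $A = \faux$ then $\rev{A^*} = \rev{\faux} = \faux$; and if $A = X^n(t_1,\dots,t_n)$ then $A^* = \Ap_n(\phi_n(X^n),t_1,\dots,t_n)$, whose first argument is a variable, so $\rev{A^*} = Y^n(t_1,\dots,t_n)$ with $Y^n = \phi_n^{-1}(\phi_n(X^n)) = X^n$, i.e. $\rev{A^*} = A$. In either case $\vdash_{i}^{2} \rev{A^*} \equi A$ trivially. For $A = B \Diamond C$ with $\Diamond \in \{\fl,\et,\ou\}$ we have $\rev{A^*} = \rev{B^*} \Diamond \rev{C^*}$, and the claim follows from the induction hypotheses applied to $B$ and $C$ together with the intuitionistically provable congruence of $\equi$ with respect to $\fl$, $\et$ and $\ou$.

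The heart of the argument is the quantifier cases. Take first $A = \q x\, B$, so $A^* = \q y\,(B[x:=y])^*$ with $y \notin \Free(B^*)$. Unfolding the reverse coding, $\rev{A^*} = \q y\,\q W^{j_1}\dots\q W^{j_q}\,\rev{(B[x:=y])^*}$, where $W^m = \phi_m^{-1}(y)$ and only those $W^m$ that occur free in $\rev{(B[x:=y])^*}$ are kept. Since $B[x:=y]$ has the same complexity as $B$, hence strictly smaller than $A$, the induction hypothesis applies to it and gives $\vdash_{i}^{2} \rev{(B[x:=y])^*} \equi B[x:=y]$; by congruence, $\rev{A^*}$ is provably equivalent to $\q y\,\q W^{j_1}\dots\q W^{j_q}\,B[x:=y]$. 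Now the freshness of $y$ is exploited: each $W^m = \phi_m^{-1}(y)$ is a second-order variable not free in $B$ — otherwise $\phi_m(W^m) = y$ would be free in $B^*$ — hence not free in $B[x:=y]$, so every quantifier $\q W^{j_k}$ is vacuous and may be deleted up to provable equivalence. What remains is $\q y\,B[x:=y]$, which is $\q x\,B$ up to renaming of the bound variable ($y$ being fresh for $B$); and in the degenerate case $x \notin \Free(B)$ one has $B[x:=y] = B$ with $y$ not free in $B$, so both $\q y\,B$ and $\q x\,B$ reduce to $B$ by elimination of a vacuous quantifier. Thus $\vdash_{i}^{2} \rev{A^*} \equi A$. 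The case $A = \e x\,B$ is identical, using that congruence and vacuous-quantifier elimination hold for $\e$ as well.

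For the second-order quantifier, let $A = \q X^n\,B$, so $A^* = \q y\,(B[X^n:=Y^n])^*$ with $\phi_n(Y^n) = y$ and $y \notin \Free(B^*)$. Exactly as above, $\rev{A^*}$ is provably equivalent to $\q y\,\q W^{j_1}\dots\q W^{j_q}\,B[X^n:=Y^n]$ by the induction hypothesis applied to $B[X^n:=Y^n]$, where $W^m = \phi_m^{-1}(y)$ and in particular $W^n = Y^n$. Freshness of $y$ now gives two things: the first-order quantifier $\q y$ is vacuous, since $y$ is not a first-order free variable of $B$, hence not of $B[X^n:=Y^n]$; and for each $m \ne n$ the quantifier $\q W^m$ is vacuous, since $W^m = \phi_m^{-1}(y)$ cannot be a free second-order variable of $B$, hence not of $B[X^n:=Y^n]$. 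Deleting all of these, $\rev{A^*}$ is provably equivalent to $\q Y^n\,B[X^n:=Y^n]$ (or, when $X^n \notin \Free(B)$, to $B[X^n:=Y^n] = B$), which equals $\q X^n\,B = A$ up to renaming of the bound variable and, where needed, vacuous-quantifier elimination. The case $A = \e X^n\,B$ is symmetric.

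The main obstacle — and exactly the point where the hypothesis that each $\phi_n$ is a bijection is needed — is the bookkeeping of free variables: in $A^*$ a single first-order variable may appear both as a genuine first-order variable and as $\phi_m(X^m)$ for one or more second-order variables $X^m$, so the commutation of the coding with a first-order renaming is delicate and one must reason at the level of free-variable sets to be sure that the extra second-order quantifiers introduced by the reverse coding are really vacuous. Once this is handled, every remaining step is a standard provable equivalence (congruence of $\equi$, renaming of bound variables, elimination of vacuous quantifiers), all available in intuitionistic natural deduction, so the classical case follows a fortiori.
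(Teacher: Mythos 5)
Your proof is correct and follows the same route as the paper, which simply says ``by induction on the formula $A$'' and leaves all details implicit. You have filled in exactly the points that matter (bijectivity of the $\phi_n$ in the base case, freshness of the renamed variable, vacuity of the extra quantifiers produced by the reverse coding, and the final $\alpha$-renaming), so this is a faithful elaboration of the paper's argument rather than a different one.
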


\begin{proof}
By induction on the formula $A$. \qed
\end{proof}

\begin{remark}
The embarrassing case of decoding $Ap_n(t,t_1,\dots,t_n)$ (where $t$
is not a variable) never arises here since we only decode encoded
formulas. We can not say that $\rev{A^*} = A$, because in the case of
the quantifier, we can add or remove some quantifiers on variables
with no occurrence. For instance, if $X^0 \neq Y^0$, $\Phi_0(X^0) =
x$ and $\Phi_0(Y^0) =
y$ then  $\rev{(\q X^0\,Y^0)^*} = \rev{(\q x\,Ap_0(y))} = \q x\,Y^0$. \qed
\end{remark}

\begin{corollary}\label{SCdeux}
$\vdash_{i}^{2} \rev{(SC_1)} \equi SC_2$ which means that each formula
in $\rev{(SC_1)}$ is equivalent to at least one formula in $SC_2$ and
vice versa.
\end{corollary}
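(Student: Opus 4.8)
The statement asserts two things: every formula in $\rev{(SC_1)}$ is provably (in $\vdash_i^2$) equivalent to some formula in $SC_2$, and conversely every formula in $SC_2$ is provably equivalent to some formula in $\rev{(SC_1)}$. The plan is to derive this directly from Lemma \ref{idempotent}, using the fact that $SC_1$ is \emph{defined} as $SC_2^* = \{F^* : F \in SC_2\}$.

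For the first direction, take an arbitrary element of $\rev{(SC_1)}$; by definition of $SC_1$ it has the form $\rev{F^*}$ for some $F \in SC_2$. Since each element of $SC_2$ is a closed second-order formula, Lemma \ref{idempotent} gives $\vdash_i^2 \rev{F^*} \equi F$, and $F \in SC_2$, so $\rev{F^*}$ is provably equivalent to a member of $SC_2$. For the converse direction, take $F \in SC_2$; then $F^* \in SC_1$ by definition, hence $\rev{F^*} \in \rev{(SC_1)}$, and again Lemma \ref{idempotent} gives $\vdash_i^2 F \equi \rev{F^*}$, so $F$ is provably equivalent to a member of $\rev{(SC_1)}$. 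Both directions are thus immediate once the definitions are unfolded.

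I expect the only mild subtlety — and the "main obstacle," though it is slight — to be bookkeeping about \emph{which} formulas actually lie in $\rev{(SC_1)}$: one should note that $\rev{(SC_1)}$ is by definition $\{\rev{F^*} : F \in SC_2\}$, so there is no issue of decoding a non-encoded formula, and in particular the embarrassing clause $\rev{Ap_n(t,t_1,\dots,t_n)} = \faux$ for non-variable $t$ never triggers (as already remarked after Lemma \ref{idempotent}). One should also be slightly careful that Lemma \ref{idempotent} is stated for second-order formulas $A$ and applied here with $A = F \in SC_2$, which is fine since the members of $SC_2$ are genuine (closed) second-order formulas. With these observations in place, the corollary follows by combining the two inclusions above, and this is exactly the sense of "$\vdash_i^2 \rev{(SC_1)} \equi SC_2$" spelled out in the statement.
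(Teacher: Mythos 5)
Your proof is correct and is exactly the paper's argument: the paper also derives the corollary directly from Lemma \ref{idempotent} by unfolding the definition $SC_1 = SC_2^*$, so that every element of $\rev{(SC_1)}$ has the form $\rev{F^*}$ with $F \in SC_2$. Your extra remarks about the non-variable clause never triggering and about closedness are sound bookkeeping, not a deviation.
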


\begin{proof}
Consequence of \ref{idempotent}. \qed
\end{proof}

\begin{example}\label{exempleutile} The aim of this example is to give
an idea of the proof of lemma \ref{transproofdeux}.

Let  $\G$ be a first-order context, $F = Ap_1(x,y) \fl Ap_2(x,y,y) \ou Ap_1(y,x)$ and $t$ a term.

We have : 
\begin{itemize}
\item $\rev{(\q x \,F)} = \q x \q X^1 \q X^2 (X^1(y) \fl X^2(y,y) \ou
Y^1(x))$ and $\rev{(\e x \,F)} = \e x \e X^1 \e X^2 (X^1(y) \fl X^2(y,y) \ou
Y^1(x))$ (where $\phi_1(Y^1) = y$).
\item If $t = z$, then $\rev{(F[x:=t])} =  Z^1(y) \fl Z^2(y,y) \ou Y^1(z)$
(where $\phi_1(Z^1) = \phi_2(Z^2) = z$) and if $t$ is not a variable, then $\rev{(F[x:=t])} =  \perp \fl \perp \ou Y^1(t)$
\end{itemize}
We remark that :
\begin{itemize}
\item $\rev{(F[x:=z])} = Z^1(y) \fl Z^2(y,y) \ou
Y^1(z) = \rev{F}[X^1 := Z^1][x := z]$ if $z$ is a variable such that $\phi_1(Z^1) = \phi_2(Z^2) = z$.
\item $\rev{(F[x:=t])} = \perp
\fl \perp \ou Y^1(t) = \rev{F}[X^1 := \lambda x_1 \perp][x := t]$ if $t$ is not a variable.
\end{itemize}
and then :
\begin{itemize}
\item If $\rev{\G} \vdash_{k}^{2} \rev{(\q x \,F)}$, then (by using
some $\q$-elimination rules) $\rev{\G} \vdash_{k}^{2} \rev{(F[x:=t])}$.
\item If $\rev{\G} \vdash_{k}^{2} \rev{(F[x:=t])}$, then (by using
some $\e$-introduction rules) $\rev{\G} \vdash_{k}^{2} \rev{(\e x \,F)}$.
\end{itemize}
\end{example}

\begin{lemma}\label{transproofdeux}
Let $\G$ be a first-order context and $A$ a first-order formula.
If $\G\vdash_{k}^{1} A$ then $\rev{\G} \vdash_{k}^{2} \rev{A}$  ($k
\in \{ i , c \}$).
\end{lemma}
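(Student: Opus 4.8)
The plan is to proceed by induction on the derivation of $\G \vdash_{k}^{1} A$, following the structure of natural deduction. For each rule, I would check that applying $\rev{\cdot}$ to premises and conclusion yields a valid instance of the same rule (or a short derivation built from instances of that rule) in second-order logic, after a suitable variable substitution. The propositional rules ($\fl$-introduction/elimination, $\et$, $\ou$, $\faux$) are immediate, since $\rev{\cdot}$ commutes with the connectives: $\rev{A \Diamond B} = \rev{A}\Diamond\rev{B}$, so the derivation translates rule-for-rule with no bookkeeping.

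The real work is in the quantifier rules, and this is where Example \ref{exempleutile} does the heavy lifting. The key algebraic fact I would isolate first is a substitution lemma describing how $\rev{\cdot}$ interacts with first-order substitution: for a first-order formula $F$ and a term $t$, if $t = z$ is a variable then $\rev{F[x:=t]} = \rev{F}[X^{i_1}:=Z^{i_1}]\dots[X^{i_p}:=Z^{i_p}][x:=z]$ where the $X^{i_j}$ range over the second-order variables $\phi_n^{-1}(x)$ occurring in $\rev{F}$ and $Z^{i_j} = \phi_{i_j}^{-1}(z)$; and if $t$ is not a variable, then $\rev{F[x:=t]} = \rev{F}[X^{i_1}:=\lambda x_1\dots x_{i_1}\faux]\dots[x:=t]$. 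Granting this, the $\q$-elimination rule (from $\rev{\G}\vdash \rev{\q x\,F} = \q x\,\q X^{i_1}\dots\q X^{i_p}\rev{F}$, derive $\rev{F[x:=t]}$) is handled by first performing the second-order $\q$-eliminations instantiating each $X^{i_j}$ by $Z^{i_j}$ (or by $\lambda x_1\dots\faux$ when $t$ is not a variable), then one first-order $\q$-elimination instantiating $x$ by $t$. Dually, $\e$-introduction goes through by the reverse sequence of $\e$-introductions. For $\q$-introduction and $\e$-elimination one must additionally check the eigenvariable side-conditions: since $x\notin\Free(\rev{\G})$ forces, via the definition of $\rev{\cdot}$ and the bijectivity of the $\phi_n$, that none of the $\phi_n^{-1}(x)$ occur free in $\rev{\G}$ either, the freshness conditions for the whole block of second-order eigenvariables plus the first-order one are met.

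One subtlety to address explicitly: because $\rev{\cdot}$ may add or delete vacuous quantifiers (as noted in the remark after Lemma \ref{idempotent}), the formula obtained by literal translation of a premise may not be syntactically identical to the one needed as a premise of the translated rule — it may differ by vacuous second-order quantifiers. I would absorb this by noting that a formula and its variant with vacuous quantifiers added or removed are interprovable in $\vdash_{i}^{2}$ (a trivial $\q$-introduction/elimination or $\e$-introduction/elimination), so we may freely pass between them, and then invoke Lemma \ref{substitution} to push the required variable substitution through the derivation of the premise. The main obstacle is thus purely the quantifier bookkeeping: getting the substitution lemma stated correctly and verifying that the eigenvariable conditions survive the translation; once that is in place, the induction is routine. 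Since the connective and axiom/weakening cases are trivial and $\rev{\cdot}$ preserves the classical absurdity rule as well, the argument covers both $k = i$ and $k = c$ uniformly.
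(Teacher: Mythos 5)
Your proposal is correct and follows essentially the same route as the paper: induction on the derivation, with the only non-trivial cases being $\q$-elimination and $\e$-introduction, handled via the substitution identities $\rev{F[x:=t]} = \rev{F}[\dots][x:=t]$ extracted from Example \ref{exempleutile}. You simply supply more detail than the paper's one-line proof (the general form of the substitution lemma, the eigenvariable check for $\q$-introduction and $\e$-elimination), all of which is consistent with the intended argument.
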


\begin{proof} By induction on the derivation of $\G \vdash_k^1 A$.
The only difficult cases are the case of the elimination of $\q$ and the
 introduction of $\e$ which are treated in the same way as the
 examples \ref{exempleutile}. \qed
\end{proof}
\medskip
Now, we can prove the converse of theorem
\ref{transproofun}, which is the main tool to prove our completeness theorems:

\begin{theorem}\label{transprooftrois}
Let $\G$ be a second-order context and $A$ a second-order formula.
If $\G^*,SC_1 \vdash_{k}^{1} A^*$ then $\G \vdash_{k}^{2} A$  ($k
\in \{ i , c \}$).
\end{theorem}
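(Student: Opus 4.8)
The plan is to reduce Theorem \ref{transprooftrois} to the already-established Lemma \ref{transproofdeux} by applying the reverse coding to the given first-order derivation, and then transporting the result back through the equivalence of Lemma \ref{idempotent}. Concretely, suppose $\G^*, SC_1 \vdash_k^1 A^*$. Applying Lemma \ref{transproofdeux} to this derivation gives $\rev{\G^*}, \rev{SC_1} \vdash_k^2 \rev{A^*}$, where $\rev{\G^*}$ denotes the set $\{\rev{B^*} : B \in \G\}$ and similarly $\rev{SC_1} = \{\rev{C} : C \in SC_1\}$.

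Next I would replace each formula in this second-order sequent by a logically equivalent one. By Lemma \ref{idempotent}, for every second-order formula $B$ we have $\vdash_i^2 \rev{B^*} \equi B$; hence each hypothesis $\rev{B^*}$ in $\rev{\G^*}$ can be derived from $B \in \G$ and conversely, so $\G \vdash_k^2 \rev{B^*}$ for each such $B$, and likewise $\rev{A^*}$ yields $A$. For the comprehension part, Corollary \ref{SCdeux} tells us that each formula in $\rev{SC_1} = \rev{(SC_1)}$ is provably equivalent (in $\vdash_i^2$) to some formula of $SC_2$; since $SC_2$ is provable in second-order logic (as noted after the definition of the comprehension schemas), each formula of $\rev{SC_1}$ is itself provable, i.e. $\vdash_i^2 C'$ for every $C' \in \rev{SC_1}$. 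Therefore all the hypotheses $\rev{SC_1}$ can be discharged outright, and the hypotheses $\rev{\G^*}$ can be replaced by $\G$ via cut (using that $\vdash_i^2$ is included in $\vdash_k^2$). Combining these, $\G \vdash_k^2 \rev{A^*}$, and one more application of Lemma \ref{idempotent} with a cut gives $\G \vdash_k^2 A$, as desired.

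The routine glue here is the fact that natural deduction is closed under cut (if $\Delta \vdash_k^2 C$ for each $C$ in a finite set $\Delta'$ and $\Delta, \Delta' \vdash_k^2 D$ then $\Delta \vdash_k^2 D$), together with the observation that only finitely many formulas of $SC_1$ and finitely many hypotheses of $\G^*$ actually occur in the given derivation, so all the substitutions are legitimate. I would also note at the start that $\rev{(\cdot)}$ commutes with forming sets of hypotheses in the obvious way, so that the premise of Lemma \ref{transproofdeux} is met by the context $\G^* \cup SC_1$.

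The main obstacle is making sure the bookkeeping on contexts is airtight: one must check that $\rev{\G^* \cup SC_1}$ is exactly $\rev{\G^*} \cup \rev{SC_1}$ (immediate from the definition, since $\rev{(\cdot)}$ acts formula-by-formula), and more delicately that when we use Lemma \ref{idempotent} and Corollary \ref{SCdeux} to swap hypotheses, the free-variable side conditions in the natural deduction rules are respected — but since Lemma \ref{idempotent} and Corollary \ref{SCdeux} are statements about closed or self-contained equivalences and $SC_2$ consists of closed formulas, no variable-capture issue arises. Everything else is a direct chaining of the three cited results.
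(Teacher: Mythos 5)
Your proof is correct and follows exactly the route the paper takes: the paper's own (one-line) proof cites precisely Lemma \ref{transproofdeux}, Corollary \ref{SCdeux}, Lemma \ref{idempotent}, and the provability of $SC_2$, which are the four ingredients you chain together. Your additional bookkeeping on cuts, finiteness of the relevant hypotheses, and closedness of the comprehension formulas is a faithful elaboration of the same argument.
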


\begin{proof} By lemma \ref{transproofdeux}, corollary \ref{SCdeux}, lemma
\ref{idempotent} and using the fact that formulas in $SC_2$ are provable.\qed
\end{proof}

\section{Classical completeness}

Here is the usual definition of second order models \cite{Man96,Pra67,VDal94}:

\begin{definition}[second-order classical model]
A second-order model for $\Lang_2$ is given by a tuple 
${\cal M}_2 = ({\cal D} , \overline{\Sigma}, 
\{{\cal P}_n\}_{n \in \mathbb{N}})$ where
\begin{itemize}
\item ${\cal D}$ is a non empty set.
\item $\overline{\Sigma}$ contains a function $\overline{f}$ from 
${\cal D}^n$ to ${\cal D}$ for each function $f$ of arity $n$ in $\Sigma$.
\item ${\cal P}_n \subseteq {\cal P}({\cal D}^n)$ for each $n \in \mathbb{N}$.
The set  ${\cal P}_n$ of subsets of ${\cal D}^n$ will be used as the range
for the second-order quantification of arity $n$.
For $n=0$, we assume that ${\cal P}_0 = {\cal P}({\cal D}^0) =
\{0,1\}$  because ${\cal P}({\cal D}^0) = {\cal P}(\emptyset) =
\{\emptyset, \{\emptyset\}\} = \{0,1\}$.
\end{itemize}

An ${\cal M}_2$-interpretation $\sigma$ is a function on $\Var
\cup \bigcup_{n \in \mathbb{N}} \Var_n$ such that $\sigma(x) \in {\cal D}$ for $x
\in \Var$ and $\sigma(X^n) \in {\cal P}_n$ for $X^n \in \Var_n$.

If $\sigma$ is a ${\cal M}_2$-interpretation, we define
$\sigma(t)$ the interpretation of a first-order term by induction with 
$\sigma(f(t_1,\dots,t_n)) = \overline{f}(\sigma(t_1),\dots,\sigma(t_n))$.

%For any second-order model ${\cal M}_2$, any second-oder
%formula $A$ and any ${\cal M}_2$-interpretation $\sigma$, we define 
%${\cal M}_2 \models A[\sigma]$ as usual by induction on $A$. 

Then if $\sigma$ is a ${\cal M}_2$-interpretation we
define ${\cal M}_2, \sigma \models A$ for a formula $A$ by
induction as follows:
\begin{itemize}
\item ${\cal M}_2, \sigma  \models X^n(t_1,\dots,t_n)$ iff $(\sigma(t_1),\dots,\sigma(t_n)) \in \sigma(X^n)$

\item ${\cal M}_2, \sigma \models A \rightarrow B$ iff 
${\cal M}_2, \sigma \models A$ implies ${\cal M}_2, \sigma \models B$

\item ${\cal M}_2, \sigma \models A \wedge B$ iff 
${\cal M}_2, \sigma \models A$ and ${\cal M}_2, \sigma \models B$

\item ${\cal M}_2, \sigma \models A \vee B$ iff 
${\cal M}_2, \sigma \models A$ or ${\cal M}_2, \sigma \models B$

\item ${\cal M}_2, \sigma \models \forall x\,A$ iff 
for all $v \in {\cal D}$ we have ${\cal M}_2, \sigma[x := v] \models A$

\item ${\cal M}_2, \sigma \models \exists x\,A$ iff there exists 
$v \in {\cal D}$ such that ${\cal M}_2, \sigma[x := v] \models A$

\item ${\cal M}_2, \sigma \models \forall X^n\,A$ iff 
for all $\pi \in {\cal P}_n$ we have ${\cal M}_2, \sigma[X^n := \pi] \models A$

\item ${\cal M}_2, \sigma \models \exists X^n\,A$ iff there exists 
$\pi \in {\cal P}_n$ such that ${\cal M}_2, \sigma[X^n := \pi] \models A$
\end{itemize}

We will write ${\cal M}_2 \models A$ if for all ${\cal
M}_2$-interpretation $\sigma$ we have ${\cal M}_2, \sigma
\models A$.
\end{definition}
  
\begin{definition}[first-order classical model]
A first-order model for $\Lang_1$ is given by a tuple 
${\cal M}_1 = ({\cal D} , \overline{\Sigma}, \{\alpha_n\}_{n \in \mathbb{N}})$ where
\begin{itemize}
\item ${\cal D}$ is a non empty set.
\item $\overline{\Sigma}$ contains a function $\overline{f}$ from 
${\cal D}^n$ to ${\cal D}$ for each function $f$ of arity $n$ in $\Sigma$.
\item $\alpha_n \subseteq {\cal D}^{n+1}$ for each $n \in \mathbb{N}$.
The relation $\alpha_n$ will be the interpretation of $\Ap_n$.
\end{itemize}

An ${\cal M}_1$-interpretation $\sigma$ is a function from $\Var$ to 
${\cal D}$.

For any first-order model ${\cal M}_1$, any first-oder
formula $A$ and any ${\cal M}_1$-interpretation $\sigma$, we define 
${\cal M}_1, \sigma \models A$ et ${\cal M}_1 \models A$
 as above by induction on $A$
(we just have to remove the cases for second-order quantification). 
\end{definition}

\begin{definition}[semantical translation]
Let ${\cal M}_1 = ({\cal D} , \overline{\Sigma}, \{\alpha_n\}_{n \in \mathbb{N}})$
 be a first-order  model. We define a second-order model
$\rev{{\cal M}_1} = ({\cal D} ,  \overline{\Sigma} , \{{\cal P}_n\}_{n \in \mathbb{N}})$
 where ${\cal P}_0 = \{0,1\}$
and for $n > 0$, ${\cal P}_n = \{|a|_n; a \in {\cal D}\}$ where $|a|_n
= \{(a_1, \dots, a_n) \in {\cal D}^n;
 (a,a_1, \dots, a_n) \in \alpha_n\}$.

Let $\sigma$ be an ${\cal M}_1$-interpretation, we define
$\rev{\sigma}$ an $\rev{{\cal M}_1}-interpretation$ by
$\rev{\sigma}(x) = \sigma(x)$ if $x \in \Var$
and
$\rev{\sigma}(X^n) = |\sigma(\phi(X^n))|_n$.
\end{definition}

\begin{lemma}\label{csemone} 
For any first-order  model ${\cal M}_1$, any ${\cal
M}_1$-interpretation $\sigma$ and any second order formula $A$,
${\cal M}_1, \sigma \models A^*$ if and only if
$\rev{{\cal M}_1}, \rev{\sigma} \models A$.
\end{lemma}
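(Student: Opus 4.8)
The plan is to prove Lemma \ref{csemone} by induction on the structure of the second-order formula $A$, showing simultaneously for all ${\cal M}_1$-interpretations $\sigma$ that ${\cal M}_1, \sigma \models A^*$ iff $\rev{{\cal M}_1}, \rev{\sigma} \models A$. Before starting, I would record a small observation: the term interpretation is unaffected by the translation, i.e.\ for every first-order term $t$, $\sigma(t) = \rev{\sigma}(t)$ (both models share the same domain ${\cal D}$ and the same $\overline{\Sigma}$, and $\rev{\sigma}$ agrees with $\sigma$ on first-order variables). This lets me freely identify $\sigma(t_i)$ with $\rev{\sigma}(t_i)$ in the atomic case.

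For the base cases: $A = \faux$ is immediate since $\faux^* = \faux$. The crucial base case is $A = X^n(t_1,\dots,t_n)$, where $A^* = \Ap_n(\phi_n(X^n),t_1,\dots,t_n)$. By definition, ${\cal M}_1,\sigma \models A^*$ iff $(\sigma(\phi_n(X^n)),\sigma(t_1),\dots,\sigma(t_n)) \in \alpha_n$, which by the definition of $|{-}|_n$ is exactly $(\sigma(t_1),\dots,\sigma(t_n)) \in |\sigma(\phi_n(X^n))|_n = \rev{\sigma}(X^n)$, i.e.\ $\rev{{\cal M}_1},\rev{\sigma} \models X^n(t_1,\dots,t_n)$. (For $n=0$ this still works with the convention ${\cal P}_0 = \{0,1\}$, reading membership appropriately.) The propositional connectives $\fl,\et,\ou$ are routine: $(A\Diamond B)^* = A^*\Diamond B^*$ and the satisfaction clauses match clause-by-clause, so the induction hypothesis on $A$ and $B$ closes these cases.

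The quantifier cases are where the real work sits, and the second-order quantifier case is the main obstacle. For first-order $\q x\,A$: we have $(\q x\,A)^* = \q y\,(A[x:=y])^*$ with $y \notin \Free(A^*)$; since renaming a bound variable changes neither truth value, it suffices to handle $\q x\,A$ directly, and the point is that for $v \in {\cal D}$, $\sigma[x:=v]$ and $\rev{\sigma[x:=v]} = \rev{\sigma}[x:=v]$ correspond under the induction hypothesis, so quantifying over all $v \in {\cal D}$ on both sides gives the equivalence; $\e x$ is symmetric. For second-order $\q X^n\,A$: here $(\q X^n\,A)^* = \q y\,(A[X^n:=Y^n])^*$ where $\Phi_n(Y^n) = y$. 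Modulo the harmless bound-variable renaming $X^n \mapsto Y^n$, I must show ${\cal M}_1,\sigma \models \q y\,(A)^*$ iff $\rev{{\cal M}_1},\rev{\sigma} \models \q X^n\,A$, where on the left $y = \phi_n(X^n)$. The left side says: for all $v \in {\cal D}$, ${\cal M}_1,\sigma[y:=v] \models A^*$, which by the induction hypothesis is: for all $v$, $\rev{{\cal M}_1},\rev{\sigma[y:=v]} \models A$. Now $\rev{\sigma[y:=v]}$ agrees with $\rev{\sigma}$ everywhere except it sends $X^n = \phi_n^{-1}(y)$ to $|v|_n$; so the statement becomes: for all $v \in {\cal D}$, $\rev{{\cal M}_1},\rev{\sigma}[X^n := |v|_n] \models A$. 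The right side says: for all $\pi \in {\cal P}_n$, $\rev{{\cal M}_1},\rev{\sigma}[X^n:=\pi] \models A$. These coincide precisely because ${\cal P}_n = \{|a|_n ; a \in {\cal D}\}$ by construction of $\rev{{\cal M}_1}$ (for $n>0$; for $n=0$ one uses ${\cal P}_0=\{0,1\}$ and checks that $\{|a|_0; a\in{\cal D}\}$ is handled by the convention, or treats $n=0$ as a separate trivial subcase). This surjectivity of $a \mapsto |a|_n$ onto ${\cal P}_n$ is the single design choice that makes the whole equivalence work, and it is the step I expect to need the most care — especially reconciling the $n=0$ convention with the uniform statement. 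The existential case $\e X^n$ is again symmetric.
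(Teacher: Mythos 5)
Your proposal is correct and follows essentially the same route as the paper: the paper's entire proof is ``by induction on $A$, immediate from the definition of the semantical translation'', and you are simply supplying the details it omits, the only substantive point being exactly the one you isolate, namely that $a \mapsto |a|_n$ is surjective onto ${\cal P}_n$ so that quantifying over ${\cal D}$ on the first-order side matches quantifying over ${\cal P}_n$ on the second-order side. The $n=0$ caveat you flag is not a formality, and the paper glosses over it: for an arbitrary ${\cal M}_1$ the set $\{|a|_0 \,;\, a \in {\cal D}\}$ need not be all of ${\cal P}_0 = \{0,1\}$ (take $\alpha_0 = \emptyset$, so that $|a|_0 = 0$ for every $a$; then with $A = \forall X^0\,(X^0 \fl \faux)$ one gets ${\cal M}_1 \models A^*$ but $\rev{{\cal M}_1} \not\models A$), so the lemma as literally stated fails in that degenerate case. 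The statement is harmless in context because the only use of the lemma, in Theorem \ref{ccomplet}, is for models with ${\cal M}_1 \models SC_1$, and the comprehension instances for $G = \faux$ and $G = \faux \fl \faux$ force both $0$ and $1$ to be of the form $|a|_0$; alternatively one can define ${\cal P}_0 = \{|a|_0 \,;\, a \in {\cal D}\}$ in $\rev{{\cal M}_1}$, at the cost of relaxing the requirement ${\cal P}_0 = \{0,1\}$ in the definition of second-order model. Either repair should be stated explicitly rather than left as ``handled by the convention''.
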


\begin{proof}
By induction on the formula $A$, this is an immediate consequence of
the definition of semantical translation. \qed
\end{proof}

\begin{corollary}\label{csemtwo} 
For any first-order  model ${\cal M}_1$, 
${\cal M}_1 \models  SC_1$ if and only if 
$\rev{{\cal M}_1} \models SC_2$.
\end{corollary}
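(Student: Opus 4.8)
The plan is to derive the corollary immediately from Lemma~\ref{csemone} together with the definition $SC_1 = SC_2^* = \{F^* ; F \in SC_2\}$. The one structural observation I would record at the outset is that every formula of $SC_2$ is closed, hence so is each translation $F^*$ (the coding $F \mapsto F^*$ only renames bound variables and leaves free variables untouched). Consequently, for an $F \in SC_2$ or an $F^* \in SC_1$, the truth value in a model does not depend on the chosen interpretation. With that in hand I would treat the two implications separately, as near-mirror images of each other.

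For the direction $\rev{{\cal M}_1} \models SC_2 \implies {\cal M}_1 \models SC_1$: given $G \in SC_1$, write $G = F^*$ with $F \in SC_2$, fix an arbitrary ${\cal M}_1$-interpretation $\sigma$ (one exists since ${\cal D} \neq \emptyset$), and apply Lemma~\ref{csemone} to obtain ${\cal M}_1, \sigma \models F^*$ iff $\rev{{\cal M}_1}, \rev{\sigma} \models F$. The right-hand side holds because $F \in SC_2$ and $\rev{{\cal M}_1} \models SC_2$, so ${\cal M}_1, \sigma \models G$ for every $\sigma$, i.e. ${\cal M}_1 \models G$; since $G \in SC_1$ was arbitrary, ${\cal M}_1 \models SC_1$.

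For the converse, given $F \in SC_2$ I would note that $F^* \in SC_1$, whence ${\cal M}_1 \models F^*$. Picking any ${\cal M}_1$-interpretation $\sigma$, I would first check that $\rev{\sigma}$ is a genuine $\rev{{\cal M}_1}$-interpretation ($\rev{\sigma}(x) = \sigma(x) \in {\cal D}$, and $\rev{\sigma}(X^n) = |\sigma(\phi_n(X^n))|_n \in {\cal P}_n$ — for $n>0$ by the very definition of ${\cal P}_n$, and for $n=0$ since $|a|_0 \in \{0,1\} = {\cal P}_0$). Then Lemma~\ref{csemone} turns ${\cal M}_1, \sigma \models F^*$ into $\rev{{\cal M}_1}, \rev{\sigma} \models F$, and closedness of $F$ upgrades this to $\rev{{\cal M}_1} \models F$. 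As $F \in SC_2$ was arbitrary, $\rev{{\cal M}_1} \models SC_2$.

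I do not expect a serious obstacle; the result is essentially a bookkeeping consequence of Lemma~\ref{csemone}. The only point that genuinely deserves attention is the role of closedness in the converse direction: it is what lets me work with the single interpretation $\rev{\sigma}$ rather than having to prove that \emph{every} $\rev{{\cal M}_1}$-interpretation arises as some $\rev{\sigma}$ --- which in fact fails in general, since a single element $a \in {\cal D}$ fixes all the sets $|a|_n$ at once, whereas an arbitrary interpretation could send $\phi_0^{-1}(y), \phi_1^{-1}(y), \dots$ to completely unrelated sets. Because the comprehension axioms are sentences, this mismatch never intervenes.
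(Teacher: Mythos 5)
Your proof is correct and follows exactly the paper's route: the corollary is derived as an immediate consequence of Lemma~\ref{csemone} together with the fact that the formulas in $SC_1$ and $SC_2$ are closed. Your closing observation about why closedness is genuinely needed (not every $\rev{{\cal M}_1}$-interpretation arises as some $\rev{\sigma}$) is a worthwhile elaboration of the point the paper leaves implicit.
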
 

\begin{proof}
Immediate consequence of lemma \ref{csemone} using the fact that formulas in
$SC_1$ and $SC_2$ are closed. \qed
\end{proof}

\begin{theorem}[Completeness of second order classical semantic]\label{ccomplet}
Let A be a closed second-order formula. $\vdash_{c}^{2} A$ iff
for any second-order model ${\cal M}_2$ such that 
${\cal M}_2 \models  SC_2$ we have ${\cal M}_2 \models A$. 
\end{theorem}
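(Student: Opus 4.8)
The plan is to derive this completeness theorem from the already-established first-order completeness theorem for $\Lang_1$, using the coding machinery set up in the previous sections. The soundness direction ($\vdash_c^2 A$ implies ${\cal M}_2 \models A$ for every ${\cal M}_2 \models SC_2$) is the routine half: one checks by induction on the derivation that each natural deduction rule preserves truth in any fixed model, exactly as in the standard soundness proof, and the comprehension instances are needed only to guarantee that the witnessing sets used in $\forall$-elimination and $\exists$-introduction actually lie in the relevant ${\cal P}_n$ — but since we quantify only over ${\cal P}_n$ anyway, soundness holds in every second-order model regardless of $SC_2$. So the work is entirely in the converse.

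For the hard direction I would argue by contraposition: assume $\not\vdash_c^2 A$. By Theorem~\ref{transprooftrois} (its contrapositive), this gives $A^* \not\in$ the first-order consequences of $SC_1$, i.e. $SC_1 \not\vdash_c^1 A^*$. Now I invoke the first-order completeness theorem for classical logic applied to the language $\Lang_1$: there is a first-order model ${\cal M}_1$ and an ${\cal M}_1$-interpretation $\sigma$ with ${\cal M}_1 \models SC_1$ but ${\cal M}_1, \sigma \not\models A^*$. (One must be slightly careful: $A$ is closed, hence $A^*$ is closed, so the interpretation $\sigma$ is irrelevant for $A^*$ and we simply have ${\cal M}_1 \not\models A^*$; but $SC_1$ consists of closed formulas too, so ${\cal M}_1 \models SC_1$ makes sense.) Then apply the semantical translation: by Corollary~\ref{csemtwo}, $\rev{{\cal M}_1} \models SC_2$, and by Lemma~\ref{csemone}, ${\cal M}_1 \not\models A^*$ yields $\rev{{\cal M}_1} \not\models A$. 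Thus $\rev{{\cal M}_1}$ is a second-order model satisfying $SC_2$ but not $A$, which contradicts the assumed right-hand side. Hence $\vdash_c^2 A$.

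The main obstacle — and the only place one needs to think rather than cite — is making sure the pieces fit at the boundary between the two formalisms. Specifically: (i) checking that the first-order completeness theorem is genuinely available for the particular language $\Lang_1$ (countably many relation symbols $\Ap_n$, the shared signature $\Sigma$), which it is since $\Lang_1$ is just an ordinary countable first-order language; (ii) handling the closedness bookkeeping so that ``$\not\vdash$'' on the first-order side really produces a \emph{model} (not merely a non-derivation) via completeness, and that this model refutes the \emph{sentence} $A^*$; and (iii) confirming that the ``only if'' and ``if'' in Lemma~\ref{csemone} and Corollary~\ref{csemtwo} are being used in the correct direction — we need ${\cal M}_1 \models SC_1 \Rightarrow \rev{{\cal M}_1} \models SC_2$ and ${\cal M}_1 \not\models A^* \Rightarrow \rev{{\cal M}_1} \not\models A$, both of which are exactly what those results give. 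Once these are lined up, the proof is a three-line chain: contrapositive of Theorem~\ref{transprooftrois}, first-order completeness, then Corollary~\ref{csemtwo} together with Lemma~\ref{csemone}.
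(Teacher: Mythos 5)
Your proof is correct and is essentially the paper's own argument: the paper runs the converse direction directly (every first-order model of $SC_1$ satisfies $A^*$ by Corollary~\ref{csemtwo}, the hypothesis and Lemma~\ref{csemone}; hence $SC_1 \vdash_{c}^{1} A^*$ by first-order completeness; hence $\vdash_{c}^{2} A$ by Theorem~\ref{transprooftrois}), while you take the contrapositive, which chains exactly the same four ingredients in the equivalent ``countermodel'' form of first-order completeness. One caveat on your soundness aside: the parenthetical claim that soundness holds in every second-order model regardless of $SC_2$ is not right --- second-order $\forall$-elimination substitutes a formula $\lambda x_1\dots x_n G$ for $X^n$, and this step is sound only when the relation defined by $G$ belongs to ${\cal P}_n$, which is precisely what restricting to models of $SC_2$ buys you; since the theorem only quantifies over such models this does not damage your proof, but the remark should be dropped.
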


\begin{proof}
$\Longrightarrow$ Usual direct proof by induction on the proof of
$\vdash_{c}^{2} A$.

$\Longleftarrow$ Let ${\cal M}_1$ be a first-order model such that
${\cal M}_1 \models  SC_1$.
Using corollary \ref{csemtwo}  we have 
$\rev{{\cal M}_1} \models  SC_2$ and by hypothesis, we get
 $\rev{{\cal M}_1} \models A$. Then using lemma \ref{csemone}
we have ${\cal M}_1 \models A^*$. As this is true for any 
first-order model satisfying $SC_1$, the first-order completeness
theorem gives $SC_1 \vdash_{c}^{1} A^*$ and this leads to the wanted result 
$\vdash_{c}^{2} A$ using theorem \ref{transprooftrois}. \qed
\end{proof}

\section{Intuitionistic completeness}

Our method, when applied to the intuitionistic case, gives the
following definition of second-order models (similar to Prawitz's
adaptation of  Beth's
models \cite{Pra70}). We mean that the definition arises mechanically
if we want to get lemma \ref{isemone} (which is the
analogous of lemma \ref{csemone} in the classical case).

\begin{definition}[second-order intuitionistic model]
A second-order Kripke model for $\Lang_2$ is given by a tuple 
${\cal K}_2 = ({\cal K}, 0, \leq, \{{\cal D}_p\}_{p \in {\cal K}} , 
\{\overline{\Sigma}_p\}_{p \in {\cal K}},
\{\Pi_{n,p}\}_{n \in \mathbb{N}, p \in {\cal K}})$ where
\begin{itemize}
\item $({\cal K}, \leq, 0)$ is a partially ordered set with $0$ as bottom element.
\item ${\cal D}_p$ are non empty sets such that 
for all $p,q \in {\cal K}$, $p \leq q$ implies ${\cal D}_p \subseteq
{\cal D}_q$.
\item $\overline{\Sigma}_p$ contains a function $\overline{f}_p$ from 
${\cal D}^n_p$ to ${\cal D}_p$ for each function $f$ of arity $n$ in
$\Sigma$. Moreover, for all $p,q \in {\cal K}$, $p \leq q$ implies
that for all $(a_1,\dots,a_n) \in {\cal D}^n_p \subseteq {\cal D}^n_q$
we have $\overline{f}_p(a_1,\dots,a_n) = \overline{f}_q(a_1,\dots,a_n)$.
\item $\Pi_{n,p}$ are non empty sets of increasing functions $
(P_q)_{q \geq p}$ such that for all $q \geq p, P_q \in
{\cal P}({\cal D}^n_q)$ (increasing means for all $q,q' \geq p$, $q \leq q'$
implies $P_q \subseteq P_{q'}$). Moreover, if $q \geq p$ and $\pi \in
\Pi_{n,p}$ then $\pi$ restricted to all $q' \geq q$ belongs to $\Pi_{n,q}$.

In particular, an element of
$\Pi_{0,p}$ is a particular increasing function in \{0,1\} with $0 = \emptyset$ and
$1 = \{\emptyset\}$.
\end{itemize}

A ${\cal K}_2$-interpretation $\sigma$ at level $p$ is a function
$\sigma$ such that 
$\sigma(x) \in {\cal D}_p$ for $x
\in \Var$ and $\sigma(X^n) \in \Pi_{n,p}$ for $X^n \in \Var_n$. 
\end{definition}

\begin{remark} If $\sigma$ is a ${\cal K}_2$-interpretation
at level $p$ and $p \leq q$ then $\sigma$ can be considered as ${\cal
K}_2$-interpretation at level $q$ by restricting all the values of
second order variables to $q' \geq q$. Then we write ${\cal K}_2,
\sigma, q \realise A$ even if $\sigma$ is defined at a level $p \leq
q$. This is used mainly in the definition of the interpretation of
implication.
\end{remark}

\begin{definition}
If $\sigma$ is a ${\cal K}_2$-interpretation at level p, we
define $\sigma(t)$ the interpretation of a first-order term by induction with 
$\sigma(f(t_1,\dots,t_n)) = \overline{f}_p(\sigma(t_1),\dots,\sigma(t_n))$.

Then if $\sigma$ is a ${\cal K}_2$-interpretation at level $p$ we
define ${\cal K}_2, \sigma, p \realise A$ for a formula $A$ by
induction as follows:
\begin{itemize}
\item ${\cal K}_2, \sigma, p \realise X^n(t_1,\dots,t_n)$ iff $(\sigma(t_1),\dots,\sigma(t_n)) \in \sigma(X^n)(p)$

\item ${\cal K}_2, \sigma, p \realise A \rightarrow B$ iff for all $q \geq p$ if
${\cal K}_2, \sigma, q \realise A$ then ${\cal K}_2, \sigma, q \realise B$

\item ${\cal K}_2, \sigma, p \realise A \wedge B$ iff 
${\cal K}_2, \sigma, p \realise A$ and ${\cal K}_2, \sigma, p \realise B$

\item ${\cal K}_2, \sigma, p \realise A \vee B$ iff 
${\cal K}_2, \sigma, p \realise A$ or ${\cal K}_2, \sigma, p \realise B$

\item ${\cal K}_2, \sigma, p \realise \forall x\,A$ iff for all $q \geq
p$, for all $v \in {\cal D}_q$ we have ${\cal K}_2, \sigma[x := v], q \realise A$

\item ${\cal K}_2, \sigma, p \realise \exists x\,A$ iff there exists 
$v \in {\cal D}_p$ such that ${\cal K}_2, \sigma[x := v], p \realise A$

\item ${\cal K}_2, \sigma, p \realise \forall X^n\,A$ iff for all $q \geq
p$, for all $\pi \in \Pi_{n,q}$ we have ${\cal K}_2, \sigma[X^n := \pi], q \realise A$

\item ${\cal K}_2, \sigma, p \realise \exists X^n\,A$ iff there exists 
$\pi \in \Pi_{n,p}$ such that ${\cal K}_2, \sigma[X^n := \pi], p \realise A$
\end{itemize}

We will write ${\cal K}_2 \realise A$ if for all ${\cal
K}_2$-interpretation $\sigma$ at level $0$ we have ${\cal K}_2, \sigma, 0 \realise A$.
\end{definition}

\begin{remark}
 Interpretations are monotonic, this means that the set of true
statements only increase when we go from world $p$ to world $q$ with
$p \leq q$.
\end{remark}
\medskip

We recall here the usual Kripke's definition \cite{Kri65} of intuitionistic models:

\begin{definition}[first-order intuitionistic model]
A first-order Kripke model is given by a tuple 
${\cal K}_1 = ({\cal K}, 0, \leq, \{{\cal D}_p\}_{p \in {\cal K}} , 
\{\overline{\Sigma}_p\}_{p \in {\cal K}},
\{\alpha_{n,p}\}_{n \in \mathbb{N}, p \in {\cal K}}, \realise)$ where
\begin{itemize}
\item $({\cal K}, \leq, 0)$ is a partially ordered set with $0$ as bottom element.
\item ${\cal D}_p$ are non empty sets such that 
for all $p,q \in {\cal K}$, $p \leq q$ implies ${\cal D}_p \subseteq
{\cal D}_q$.
\item $\overline{\Sigma}_p$ contains a function $\overline{f}_p$ from 
${\cal D}^n_p$ to ${\cal D}_p$ for each function $f$ of arity $n$ in
$\Sigma$. Moreover, for all $p,q \in {\cal K}$, $p \leq q$ implies
that for all $(a_1,\dots,a_n) \in {\cal D}^n_p \subseteq {\cal D}^n_q$
we have $\overline{f}_p(a_1,\dots,a_n) = \overline{f}_q(a_1,\dots,a_n)$.
\item $\alpha_{n,p}$  are subsets of  ${\cal D}_p^{n+1}$ such that 
for all $p,q \in {\cal K}$, for all $n \in \mathbb{N}$, 
$p \leq q$ implies $\alpha_{n,p}\subseteq
\alpha_{n,q}$.
\item $\realise$ is the relation defined by 
$p \realise \Ap_n(a,a_1,\dots,a_n)$ if and only if 
$p \in {\cal K}$ and
$(a,a_1,\dots,a_n) \in \alpha_{n,p}$.
\end{itemize}

A ${\cal K}_1$-interpretation $\sigma$ at level $p$ is a function from $\Var$ 
to ${\cal D}_p$.

For any first-order Kripke model ${\cal K}_1$, any first-oder formula
$A$ and any ${\cal K}_1$-interpretation $\sigma$, we define ${\cal
K}_1, p, \sigma \realise A$ as above.

We will write ${\cal K}_1 \realise A$ iff for ${\cal
K}_1$-interpretation $\sigma$ at level $0$ we have
${\cal K}_1, \sigma, 0 \realise A$.
\end{definition}

\begin{definition}[semantical translation]
Let 
$${\cal K}_1 = ({\cal K}, 0, \leq, \{{\cal D}_p\}_{p \in {\cal K}} , 
\{\overline{\Sigma}_p\}_{p \in {\cal K}},
\{\alpha_{n,p}\}_{n \in \mathbb{N}, p \in {\cal K}}, \realise)$$
 be a first-order Kripke model. We define a second-order Kripke model
$$\rev{{\cal K}_1} = ({\cal K}, 0, \leq, \{{\cal D}_p\}_{p \in {\cal K}}, 
\{\overline{\Sigma}_p\}_{p \in {\cal K}}, 
\{\Pi_{n,p}\}_{n \in \mathbb{N}, p \in {\cal K}})$$
 where  $\Pi_{n,p} = \{|a|_n; a \in {\cal D}_p\}$ with for all $q
\geq p$, $|a|_n(q) = \{(a_1, \dots, a_n) \in {\cal D}^n_q;
 (a,a_1, \dots, a_n) \in \alpha_{n,q}\}$.

Let $\sigma$ be a ${\cal K}_1$-interpretation at level $p$, we define
$\rev{\sigma}$ a $\rev{{\cal K}_1}$-interpretation at level $p$ by
$\rev{\sigma}(x) = \sigma(x)$ 
and
$\rev{\sigma}(X^n) = |\sigma(\phi(X^n))|_n$.
\end{definition}

\begin{lemma}\label{isemone} 
For any first-order Kripke model ${\cal K}_1$, any ${\cal
K}_1$-interpretation $\sigma$ at level $p$ and any second order formula $A$,
${\cal K}_1,\sigma,p \realise A^*$ if and only if
$\rev{{\cal K}_1},\rev{\sigma},p \realise A$.
\end{lemma}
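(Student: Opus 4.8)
The plan is to prove Lemma~\ref{isemone} by induction on the structure of the second-order formula $A$, exactly mirroring the shape of the classical argument for Lemma~\ref{csemone} but carrying the Kripke index $p$ and the monotonicity conditions along. The key observation driving everything is that the semantical translation is designed so that, at every world $p$, the interpretation $\rev\sigma(X^n) = |\sigma(\phi_n(X^n))|_n$ satisfies $\rev\sigma(X^n)(p) = \{(a_1,\dots,a_n) \in {\cal D}_p^n ; (\sigma(\phi_n(X^n)),a_1,\dots,a_n) \in \alpha_{n,p}\}$, which is precisely the set deciding the truth of $\Ap_n(\phi_n(X^n),t_1,\dots,t_n)$ at $p$. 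So the atomic case is immediate from the definitions, once one checks that $\sigma(t_i) = \rev\sigma(t_i)$ for terms (a trivial sub-induction using that $\rev\sigma$ and $\sigma$ agree on first-order variables and the function interpretations $\overline f_p$ are shared between ${\cal K}_1$ and $\rev{{\cal K}_1}$).

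For the propositional connectives $\fl$, $\et$, $\ou$ the induction is routine: the Kripke clauses for ${\cal K}_1,\sigma,q \realise (A\Diamond B)^* = A^*\Diamond B^*$ unfold to the same boolean/quantified-over-$q\geq p$ combination of ${\cal K}_1,\sigma,q\realise A^*$ and ${\cal K}_1,\sigma,q\realise B^*$ as the clause for $\rev{{\cal K}_1},\rev\sigma,q\realise A\Diamond B$ does in terms of the pieces, so the induction hypothesis applied at each relevant world $q\geq p$ closes the case. One small point worth spelling out: for $\fl$ we quantify over all $q\geq p$, and we need the induction hypothesis at those $q$; this is fine since $\sigma$ at level $p$ is also an interpretation at level $q$, and $\rev{\sigma}$ restricted to $q$ is exactly the translate of $\sigma$ restricted to $q$, because $|a|_n$ restricted to worlds $\geq q$ is $|a|_n$ computed in the model truncated at $q$.

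The genuinely delicate cases are the quantifiers, and the second-order quantifier is the heart of the matter. For first-order $\q x\,A$ we have $(\q x\,A)^* = \q y\,(A[x:=y])^*$ and we must match ``for all $q\geq p$, for all $v\in{\cal D}_q$'' on both sides; after renaming this reduces to the induction hypothesis applied to $A[x:=y]$ with the interpretation $\sigma[y:=v]$, using Lemma~\ref{substitution}-style commutation of substitution with interpretation. For second-order $\q X^n\,A$, where $(\q X^n\,A)^* = \q y\,(A[X^n:=Y^n])^*$ with $\phi_n(Y^n)=y$, the left side ranges over all $v\in{\cal D}_q$ giving the interpretation $\sigma[y:=v]$ of a \emph{first-order} variable, while the right side ranges over all $\pi\in\Pi_{n,q}$ giving $\rev\sigma[X^n:=\pi]$. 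The crucial design fact is that $\Pi_{n,q} = \{|a|_n ; a\in{\cal D}_q\}$, so the map $v\mapsto |v|_n$ is a surjection from ${\cal D}_q$ onto $\Pi_{n,q}$, and one checks that $\widetilde{\sigma[y:=v]}$ agrees with $\rev\sigma[X^n:=|v|_n]$ on all variables relevant to $A$ (here the bijectivity of $\phi_n$ is what guarantees that changing the value at $y$ affects exactly the variable $\phi_n^{-1}(y)=X^n$ of arity $n$ and nothing else). Then the quantifier clauses correspond under this surjection, and the induction hypothesis on $A[X^n:=Y^n]$ finishes it; the existential cases $\e x$, $\e X^n$ are handled the same way at the single world $p$. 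I expect this second-order universal/existential correspondence — verifying that ranging over $v\in{\cal D}_q$ on the first-order side exactly covers ranging over $\pi\in\Pi_{n,q}$ on the second-order side, and that the two interpretations genuinely coincide on $A$ — to be the main obstacle, since it is where the whole ``reverse coding'' trick and the bijectivity of the $\phi_n$ are really used.
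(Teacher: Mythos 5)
Your proof is correct and follows the same route as the paper, which simply states that the lemma is proved by induction on $A$ as an immediate consequence of the definition of the semantical translation; you have filled in the details the paper leaves implicit, correctly identifying the surjectivity of $v\mapsto|v|_n$ onto $\Pi_{n,q}$ and the freshness condition $y\notin\Free(A^*)$ (together with the bijectivity of the $\phi_n$) as the points that make the second-order quantifier case go through.
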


\begin{proof}
By induction on the formula $A$, this is an immediate consequence of
the definition of semantical translation. \qed
\end{proof}

\begin{corollary}\label{isemtwo} 
For any first-order Kripke model ${\cal K}_1$, 
${\cal K}_1 \realise  SC_1$ if and only if 
$\rev{{\cal K}_1} \realise SC_2$.
\end{corollary}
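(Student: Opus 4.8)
The plan is to mimic the proof of Corollary~\ref{csemtwo} almost verbatim, substituting Lemma~\ref{isemone} for Lemma~\ref{csemone}. First I would recall that every formula occurring in $SC_1$ is of the form $F^*$ for some $F \in SC_2$, and that these formulas are closed, so their truth in a Kripke model does not depend on the chosen interpretation $\sigma$. Consequently, for a first-order Kripke model ${\cal K}_1$, the statement ${\cal K}_1 \realise SC_1$ unfolds to: for every $F \in SC_2$ and every ${\cal K}_1$-interpretation $\sigma$ at level $0$, we have ${\cal K}_1, \sigma, 0 \realise F^*$.

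Next I would apply Lemma~\ref{isemone} at level $p = 0$: for each such $F$ and $\sigma$, we have ${\cal K}_1, \sigma, 0 \realise F^*$ if and only if $\rev{{\cal K}_1}, \rev{\sigma}, 0 \realise F$. Since $\sigma \mapsto \rev{\sigma}$ sends ${\cal K}_1$-interpretations at level $0$ to $\rev{{\cal K}_1}$-interpretations at level $0$, and since $F$ is closed (so only the value of $\rev{\sigma}$ is irrelevant, but every $\rev{{\cal K}_1}$-interpretation at level $0$ is covered because the domain ${\cal D}_0$ is non-empty and the $\Pi_{n,0}$ are non-empty, making the relation $\realise$ at level $0$ insensitive to $\sigma$), we conclude that ${\cal K}_1 \realise SC_1$ holds precisely when $\rev{{\cal K}_1}, \tau, 0 \realise F$ for every $F \in SC_2$ and every $\rev{{\cal K}_1}$-interpretation $\tau$ at level $0$, i.e. precisely when $\rev{{\cal K}_1} \realise SC_2$.

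The only point requiring a little care — and the one I would flag as the main obstacle, though it is minor — is the quantifier over interpretations: Lemma~\ref{isemone} relates ${\cal K}_1, \sigma, 0 \realise F^*$ to $\rev{{\cal K}_1}, \rev{\sigma}, 0 \realise F$ only for interpretations of the special form $\rev{\sigma}$, so a priori one worries whether the equivalence ${\cal K}_1 \realise SC_1 \iff \rev{{\cal K}_1} \realise SC_2$ could fail because some $\rev{{\cal K}_1}$-interpretation is not of the form $\rev{\sigma}$. This is resolved exactly as in Corollary~\ref{csemtwo}: since all formulas in $SC_1$ and $SC_2$ are closed, the forcing ${\cal K}_1, \sigma, 0 \realise F^*$ and $\rev{{\cal K}_1}, \tau, 0 \realise F$ are independent of $\sigma$ and $\tau$ respectively, so it suffices to test against a single interpretation on each side, and Lemma~\ref{isemone} provides one matched pair. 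Hence the proof is simply: immediate consequence of Lemma~\ref{isemone}, using that the formulas in $SC_1$ and $SC_2$ are closed. \qed
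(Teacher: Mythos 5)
Your proof is correct and takes essentially the same route as the paper, which also disposes of this corollary as an immediate consequence of Lemma~\ref{isemone} together with the fact (made explicit in the classical analogue, Corollary~\ref{csemtwo}) that the formulas in $SC_1$ and $SC_2$ are closed, so forcing at level $0$ is independent of the interpretation. The subtlety you flag --- that Lemma~\ref{isemone} only covers interpretations of the form $\rev{\sigma}$ --- is precisely what the closedness observation resolves, exactly as you argue.
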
 

\begin{proof}
Immediate consequence of lemma \ref{isemone}. \qed
\end{proof}

\begin{theorem}[Completeness of second order intuitionistic semantic]\label{icomplet}
Let A be a closed second-order. $\vdash_{i}^{2} A$ iff
for all second-order Kripke model ${\cal K}_2$ such that 
${\cal K}_2 \realise  SC_2$ we have ${\cal K}_2 \realise A$. 
\end{theorem}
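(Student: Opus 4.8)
\textbf{Proof plan for Theorem \ref{icomplet}.}
The structure mirrors exactly the classical case (Theorem \ref{ccomplet}), so the plan is to prove the two directions separately and reuse the machinery already built for the translation $(-)^*$ and the reverse coding $\rev{(-)}$.

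For the direction $\Longrightarrow$, I would give the usual soundness argument: by induction on the derivation $\vdash_{i}^{2} A$ one checks that each natural deduction rule preserves forcing in every second-order Kripke model, and the hypothesis ${\cal K}_2 \realise SC_2$ handles the comprehension instances when they are used. The only points needing care are the monotonicity of forcing along $\leq$ (needed for the $\fl$ and $\q$ rules) and the fact that $\Pi_{n,q}$ is closed under restriction to worlds above $q$ (needed so that the witness chosen for a second-order $\e$-introduction or substituted in a second-order $\q$-elimination is a legitimate element of the relevant $\Pi_{n,\cdot}$); both are guaranteed by the definition of second-order Kripke model.

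For the direction $\Longleftarrow$, I would argue contrapositively through first-order Kripke completeness, exactly as in Theorem \ref{ccomplet}. Let ${\cal K}_1$ be an arbitrary first-order Kripke model with ${\cal K}_1 \realise SC_1$. By Corollary \ref{isemtwo}, $\rev{{\cal K}_1} \realise SC_2$, so the hypothesis gives $\rev{{\cal K}_1} \realise A$; then Lemma \ref{isemone} (applied at level $0$ to every interpretation) yields ${\cal K}_1 \realise A^*$. Since ${\cal K}_1$ was an arbitrary first-order Kripke model satisfying $SC_1$, the completeness theorem for first-order intuitionistic logic gives $SC_1 \vdash_{i}^{1} A^*$, i.e. $\emptyset^*, SC_1 \vdash_{i}^{1} A^*$. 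Finally Theorem \ref{transprooftrois} converts this into $\vdash_{i}^{2} A$.

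The only genuinely new obstacle compared to the classical proof is making sure the intuitionistic first-order completeness theorem is available in the exact form needed — completeness with respect to Kripke models for derivability from a (possibly infinite) set of closed hypotheses $SC_1$ — and that Lemma \ref{isemone} and Corollary \ref{isemtwo} are phrased uniformly over all worlds and interpretations, not just at world $0$; the remark on monotonicity of interpretations and the closure conditions in the definition of ${\cal K}_2$ are what let the soundness direction and the back-translation go through without friction. Everything else is a routine transcription of the classical argument with "$\models$" replaced by "$\realise$" and "model" replaced by "Kripke model".
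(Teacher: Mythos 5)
Your proposal is correct and follows exactly the paper's own argument: the $\Longrightarrow$ direction by the usual soundness induction, and the $\Longleftarrow$ direction by running the proof of Theorem \ref{ccomplet} verbatim with Lemmas \ref{isemone} and \ref{isemtwo} in place of \ref{csemone} and \ref{csemtwo}, then invoking first-order intuitionistic Kripke completeness and Theorem \ref{transprooftrois}. The caveats you flag (monotonicity, closure of $\Pi_{n,p}$ under restriction, and the form of first-order completeness needed) are exactly the points the paper relies on implicitly.
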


\begin{proof}
$\Longrightarrow$ Usual direct proof by induction on the proof of
$\vdash_{i}^{2} A$.

$\Longleftarrow$ Identical to the proof of theorem \ref{ccomplet}
using the lemmas \ref{isemone} and \ref{isemtwo} instead of lemmas
\ref{csemone} and \ref{csemtwo}.\qed
\end{proof}

\section{Examples of second order propositional intuitionistic models}

In this section we will only consider propositional intuitionistic
logic. Then the definition of models can be simplified using the
following remark:

\begin{remark}
The interpretation of a propositional variable at level $p$ can be
seen as a bar: a bar being a set $\cal B$ with 
\begin{itemize}
\item for all $q \in \cal B$,  $q \geq p$    
\item for all $q, q' \in \cal B$ such that $q \neq q'$, we have neither $q \leq q'$ nor  $q' \leq q$
\end{itemize}

In the case of finite model, there is a canonical isomorphism between
the set of bars and the set of increasing functions in $\{0, 1\}$ by
associating to a bar $\cal B$ the function $\pi$ such that $\pi(q) =
1$ if and only if there exists $r \in \cal B$ such that $q \geq r$.
This usually helps to ``see'' the interpretation of a formula.

This is not the case for infinite model, if we consider $\mathbb Q^+$,
the set of rational greater than $\sqrt{2}$ is not a bar. 
\end{remark}

\begin{example}
We will now construct a counter model for the universally quantified
Peirce's law: $P = \forall X \forall Y (((X \rightarrow Y) \rightarrow
X) \rightarrow X)$: We take a model ${\cal K}_2$ with two points $0,p$
and such that $\Pi_{0,0}$ contains $\pi_1$ and $\pi_2$ defined by
$\pi_1(0) = \pi_2(0) = \pi_2(p) = 0$ and $\pi_1(p) = 1$ (this means
that $\pi_2$ is the empty bar and $\pi_1$ is then bar $\{p\}$). It is clear
that ${\cal K}_2, \sigma[X:=\pi_1,Y:=\pi_2], 0 \not\realise ((X
\rightarrow Y) \rightarrow X) \rightarrow X$. So we have ${\cal K}_2
\not\realise P$. We can also remark that this model is not full the
bar $\{0\}$ is missing.

\end{example}

A natural question arises: if one codes as usual conjunction,
disjunction and existential using implication and second order
universal quantification what semantics is induced by this coding?
If we keep the original conjunction, disjunction and existential, it
is obvious that the defined connective are provably equivalent to the
original ones, and therefore, have the same semantics.

However, if we remove conjunction, disjunction and existential from
the model we only have the following:

\begin{proposition}
The semantics induced by the second order coding of conjunction,
disjunction and existential is the standard Kripke's semantics if the
model is full (that is if $\Pi_{n,p}$ is the set of all increasing
functions with the desired properties).
\end{proposition}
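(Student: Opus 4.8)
The plan is to show that, when the model is full, the Kripke semantics of the second-order encoded connectives coincides, clause by clause, with the primitive Kripke clauses for $\et$, $\ou$ and $\e$. Recall the standard encodings: $A \et B := \q X ((A \fl B \fl X) \fl X)$, $A \ou B := \q X ((A \fl X) \fl (B \fl X) \fl X)$, and $\e x\, A := \q X (\q x (A \fl X) \fl X)$ (with $X$ a fresh propositional variable, $X^0$, and the analogue for second-order existentials). Fix a full model ${\cal K}_2$, a world $p$, and an interpretation $\sigma$ at level $p$; I would prove by induction on the formula that ${\cal K}_2, \sigma, p \realise (A \et B)^{\text{enc}}$ iff ${\cal K}_2, \sigma, p \realise A$ and ${\cal K}_2, \sigma, p \realise B$, and similarly for the other two. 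The induction is only needed to carry the statement through subformulas; the heart of the matter is each single unfolding step, so I will concentrate on that.

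First I would treat conjunction. Unfolding the clauses for $\q X^0$ and $\fl$: ${\cal K}_2, \sigma, p \realise \q X ((A \fl B \fl X) \fl X)$ iff for all $q \geq p$ and all $\pi \in \Pi_{0,q}$, if ${\cal K}_2, \sigma[X:=\pi], q \realise A \fl B \fl X$ then ${\cal K}_2, \sigma[X:=\pi], q \realise X$, i.e. $\emptyset \in \pi(q)$. The $(\Leftarrow)$ direction is easy: if $A$ and $B$ hold at $p$ then, by monotonicity, they hold at every $q \geq p$, so the hypothesis ${\cal K}_2, \sigma[X:=\pi], q \realise A \fl B \fl X$ forces $\emptyset \in \pi(q')$ whenever $A,B,X$ chain appropriately — instantiating at $q' = q$ with $A,B$ already true gives $\emptyset \in \pi(q)$. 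For $(\Rightarrow)$, here is where fullness is used: I would choose $\pi$ to be the interpretation of $A \et B$ itself — i.e. the increasing function $q \mapsto \{\emptyset\}$ if ${\cal K}_2, \sigma, q \realise A$ and ${\cal K}_2, \sigma, q \realise B$, and $q \mapsto \emptyset$ otherwise. Monotonicity of this function follows from monotonicity of the satisfaction relation, and fullness guarantees $\pi \in \Pi_{0,p}$ (hence, by the restriction condition, $\pi \in \Pi_{0,q}$ for all $q \geq p$). With this $\pi$, one checks ${\cal K}_2, \sigma[X:=\pi], p \realise A \fl B \fl X$ directly from the definition of $\pi$, so the encoded formula forces $\emptyset \in \pi(p)$, which by construction of $\pi$ means exactly ${\cal K}_2, \sigma, p \realise A$ and ${\cal K}_2, \sigma, p \realise B$. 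The cases of $\ou$ and $\e$ are analogous: for $\ou$ take $\pi(q) = \{\emptyset\}$ iff ${\cal K}_2, \sigma, q \realise A$ or ${\cal K}_2, \sigma, q \realise B$; for $\e x\, A$ take $\pi(q) = \{\emptyset\}$ iff there is $v \in {\cal D}_q$ with ${\cal K}_2, \sigma[x:=v], q \realise A$. Each time, fullness is exactly what licenses using this "definable" $\pi$ as a legitimate quantification range, and the semantic clause for $\q X^0$ applied to that $\pi$ collapses to the primitive clause.

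The main obstacle I anticipate is the $\ou$ (and correspondingly the second-order $\e$) case in the $(\Rightarrow)$ direction, because there the hypothesis ${\cal K}_2, \sigma[X:=\pi], q \realise (A \fl X) \fl (B \fl X) \fl X$ is not immediately usable: one must supply proofs of $A \fl X$ and $B \fl X$ at world $p$ before extracting $\emptyset \in \pi(p)$, and with the chosen $\pi$ one has ${\cal K}_2, \sigma, q \realise A \fl X$ for every $q \geq p$ precisely because whenever $A$ becomes true at some $q' \geq q$ the function $\pi$ has value $\{\emptyset\}$ there — this is again just monotonicity plus the definition of $\pi$, but it requires care to phrase correctly at all worlds simultaneously rather than only at $p$. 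A secondary point worth stating explicitly is the role of the restriction clause in the definition of $\Pi_{n,p}$ (that a $\pi \in \Pi_{n,p}$ restricted to worlds above $q$ lies in $\Pi_{n,q}$): without it, fullness at $p$ would not propagate to the worlds $q \geq p$ over which the $\q X$ clause quantifies, and the argument would not go through. Once these are handled, the induction closing step (replacing $A$ and $B$ by their encodings and invoking the induction hypothesis) is purely formal.
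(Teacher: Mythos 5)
Your proposal is correct and follows essentially the same route as the paper's proof: the right-to-left direction is handled by monotonicity, and the left-to-right direction instantiates the second-order quantifier at the increasing function $\pi$ whose value at $q$ records the truth of $A\wedge B$ (resp.\ $A\vee B$, $\exists\chi\,A$) at $q$, with fullness guaranteeing $\pi\in\Pi_{0,p}$. Your additional remarks on the restriction condition for $\Pi_{n,p}$ and on the care needed in the disjunction case are accurate elaborations of what the paper leaves implicit.
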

 
\begin{proof}
\begin{description}
\item[$A \wedge B = \forall X ((A \rightarrow (B \rightarrow X)) \rightarrow X)$]: We must prove
that ${\cal K}_2, \sigma, p \realise A \wedge B$ if and only if 
${\cal K}_2, \sigma, p \realise A$ and ${\cal K}_2, \sigma, p \realise
B$. The right to left implication is trivial. For the left to right,
we assume ${\cal K}_2, \sigma, p \realise A \wedge B$, We consider the
interpretation $\pi$ defined by $\pi(q) = 1$ if and only if 
${\cal K}_2, \sigma, q \realise A$ and ${\cal K}_2, \sigma, q \realise
B$. Then it is immediate that 
${\cal K}_2, \sigma[X := \pi], p \realise A \rightarrow (B \rightarrow X)$. So we have 
${\cal K}_2, \sigma[X := \pi], p \realise X$ which means that 
$\pi(p) = 1$ which is equivalent to ${\cal K}_2, \sigma, p \realise A$
and ${\cal K}_2, \sigma, p \realise
B$.

\item[$A \vee B = \forall X ((A \rightarrow X) \rightarrow (B \rightarrow X) \rightarrow X)$]: The proof is
similar using $\pi$ defined by 
$\pi(q) = 1$ if and only if 
${\cal K}_2, \sigma, q \realise A$ or ${\cal K}_2, \sigma, q \realise
B$.
\item[$\exists \chi\,A = \forall X (\forall \chi (A \rightarrow X) \rightarrow X)$]: The proof is
similar using $\pi$ defined by 
$\pi(q) = 1$ if and only if there exists $\phi$
a possible interpretation for $\chi$ such that
${\cal K}_2, \sigma[\chi := \phi], q \realise A$.\qed

\end{description}
\end{proof}

\begin{remark}
If we compare this proof to the proof in \cite{Kri90e,Par88} about data-types
in AF2, we remark that second order intuitionistic models are very
similar to realizability models. Moreover, in both cases, we are in general
unable to compute the semantics of a formula if the model is not full
(for realizability, not full means that the interpretation of second
order quantification is an intersection over a strict subset of the
set of all sets of lambda-terms).

Moreover, the standard interpretation of the conjunction is
${\cal K}, \sigma, p \realise A \wedge B$ if and only if ${\cal K},
\sigma, p \realise A$ and ${\cal K}, \sigma, p \realise B$. However,
if the model is not full and if the language does not contain the
conjunction, the function $\pi$ defined for $q \geq p$ by $\pi(p) = 1$
if and only if ${\cal K}, \sigma, q \realise A$ and ${\cal K},
\sigma, q \realise B$ does not always belong to $\Pi_{0,p}$. In this
case, the interpretation of the second order definition of the
conjunction is strictly smaller than the natural interpretation.

It would be interesting to be able to construct such non standard
model, but this is very hard (due to the comprehension schemas). In
fact the authors do not know any practical way to construct such a non
full model. In the framework of realizability, such non full model
would be very useful to prove that some terms are not typable of type
$A$ in Girard's system F while they belong to the interpretation of
$A$ in all full models (for instance Maurey's term for the $\inf$
function on natural number).
\end{remark}

\begin{small}
\baselineskip=0.17in
\nocite{*}
\bibliography{biblio.bib}
\bibliographystyle{plain}
\end{small}

\end{document}